\documentclass[12pt, twoside, leqno]{article}

% Modif. March 12, 2013
% Send comments to publ@impan.pl

% Using pdflatex is preferred

\usepackage{amsmath,amsthm}
\usepackage{amssymb}

%% Optional, but useful:
\usepackage{enumerate}

%% Add only when there are figures:
\usepackage{graphicx}

%% If you are using letters of the Polish alphabet, add 
%\usepackage[T1]{fontenc}
%% E.g. the name "Zoladz" is then coded \.Zo{\l}\k{a}d\'z

%% In the running head, replace first names by initials 
%% and give an abbreviation of the title.

\pagestyle{myheadings}
\markboth{O. Ozturk}{Discrete Fractional Solutions of a Physical Differential Equation}

%%%%%

%% Numbered objects of "theorem" style (text italicized).
%% Below, the optional parameters indicate that all objects are numbered together, and "by section".
%% However, you are welcome to use any other numbering system of your choice, as well as your own abbreviations.

\newtheorem{thm}{Theorem}[section]

\newtheorem{lem}[thm]{Lemma}

%% A numbered theorem with a fancy name:

%% Numbered objects of "non-theorem" style (text roman):

\theoremstyle{definition}
\newtheorem{defin}[thm]{Definition}

\newtheorem{exa}[thm]{Example}

%% An unnumbered object:

%% Equations numbered by section (optional):

\numberwithin{equation}{section}

%%%%%%%%%%% For IMPAN journals:

\frenchspacing

\textwidth=13.5cm
\textheight=23cm
\parindent=16pt
\oddsidemargin=-0.5cm
\evensidemargin=-0.5cm
\topmargin=-0.5cm

%%%%%%%%%%%%%%%%%%%%%%%%%%%%%%%%%%%
%%%%%%%%%%%%%%%%%%%%%%%%%%%%%%%%%%%

%%%% Put your macros here:

%%%%%%%%%%%%%

\begin{document}

%%%%% To ease editing, for IMPAN journals add:

\baselineskip=17pt

%%%%%%%%%%%%%%%%

\title{Discrete Fractional Solutions of a Physical Differential Equation via $ \nabla $-DFC Operator}

\author{Okkes Ozturk\\
Department of Mathematics\\ 
Bitlis Eren University\\
13000 Bitlis, Turkey\\
E-mail: oozturk27@gmail.com}

\date{}

\maketitle

%% Classification and key words; note that the 2010 classification is used:

\renewcommand{\thefootnote}{}

\footnote{2010 \emph{Mathematics Subject Classification}: Primary 26A33; Secondary 34A08, 39A70.}

\footnote{\emph{Key words and phrases}: fractional calculus, discrete fractional calculus (DFC), nabla discrete fractional operator, Leibniz rule, radial Schr{\"o}dinger equation.}

\renewcommand{\thefootnote}{\arabic{footnote}}
\setcounter{footnote}{0}

%%%%%%%%

\begin{abstract}
Discrete mathematics, the study of finite structures, is one of the fastest growing areas in mathematics and optimization. Discrete fractional calculus (DFC) theory that is an important subject of the fractional calculus includes the difference of fractional order. In present paper, we mention the radial Schr{\"o}dinger equation which is a physical and singular differential equation. And, we can obtain the particular solutions of this equation by applying nabla ($ \nabla $) discrete fractional operator. This operator gives successful results for the singular equations, and solutions have fractional forms including discrete shift operator $ E $.
\end{abstract}

\section{Introduction}
Fractional calculus that deals with derivatives and integrals of arbitrary orders is a popular field of mathematics, and their applications appear in applied mathematics, physics, chemistry, engineering, and so on \cite{Baleanu2010, Baleanu2012, Miller, Oldham, Podlubny, Samko, Yilmazer}. There is a similarity between properties of fractional calculus and discrete fractional calculus (DFC) that includes difference of fractional order \cite{Diaz, Kuttner}. There are many examples of studies on DFC and Schr{\"o}dinger equation that is the main subject of this paper. A very simple perturbative numerical algorithm was developed \cite{Adam} for the solution of the radial Schr{\"o}dinger equation. The behaviour of the solution of the Schr{\"o}dinger radial equation was studied \cite{Hajj}. Gonzales et al. \cite{Gonzales} used integral equation method for the continuous spectrum radial Schr{\"o}dinger equation. Tselios and Simos \cite{Tselios} introduced new symplectic-schemes for the numerical solution of the radial Shr{\"o}dinger equation, and developed symplectic integrators for Hamiltonian systems. A simple exact analytical solution of the radial Schr{\"o}dinger equation for the Kratzer potential within the framework of the asymptotic iteration method (AIM) was presented \cite{Bayrak}. Ambrosetti and Ruiz \cite{Ambrosetti} proved the existence of radial solutions concentrating on spheres of nonlinear Schr{\"o}dinger equations with vanishing potentials. Anastassiou \cite{Anastassiou} developed the delta fractional calculus on time scales, and then produced related integral inequalities of types: Poincar?, Sobolev, Opial, Ostrowski and Hilbert-Pachpatte, and also introduced inequalities' applications on the time scales. Ferreira and Torres \cite{Ferreira} proved new results for the right fractional $ h $ sum, and mentioned the effectiveness of the obtained results in solving fractional discrete Euler-Lagrange equations. Oscillation of fractional nonlinear difference equations  was studied \cite{Marian}. Abdeljawad \cite{Abdeljawad} introduced two types of Caputo fractional differences, and investigated the relation between Riemann and Caputo fractional differences, and also provided the delta and nabla discrete Mittag-Leffler functions by solving Caputo type linear fractional difference equations. Ortigueira et al. \cite{Ortigueira} presented a derivative based discrete-time signal processing, and studied both nabla and delta derivatives, and also generalised including the fractional case. Wu and Baleanu \cite{Wu} studied on the analytical aspects, and the variational iteration method was extended in a new way to solve an initial value problem. The existence of solutions for antiperiodic boundary value problem and the Ulam stability for nonlinear fractional difference equations was mentioned \cite{ChenF}. Mohan and Deekshitulu \cite{MohanDeekshitulu} presented some important properties of N-transform, which is the Laplace transform for the nabla derivative on the time scale of integers, and obtained the N-transform of nabla fractional sums and differences and then applied this transform to solve some nabla fractional difference equations with initial value problems. Jonnalagadda \cite{Jonnalagadda} discussed the dependence of solutions of nabla fractional difference equations on the initial conditions and then obtained a fractional variation of constants formula for nabla fractional difference equations involving Caputo type fractional differences. A monotonicity result for discrete fractional difference operators was obtained \cite{Dahal}. Chen and Tang \cite{ChenY} expressed the differences between a class of fractional difference equations, and the integer-order ones, and indicated that under the same boundary conditions, the problem of the fractional order is nonresonant, while the integer-order one is resonant. Then, they analyzed the discrete fractional boundary value problem in detail, and obtained the uniqueness and multiplicity of the solutions for the discrete fractional boundary value problem. Mohan \cite{Mohan} mentioned the continuous dependence of solutions on the initial conditions for nabla fractional difference equations, and also obtained the linear variation of parameters formula for nabla fractional difference equations involving Riemann-Liouville type fractional differences. Dassios and Baleanu \cite{Dassios} formed a link between the solutions of an initial value problem of a linear singular system of fractional nabla difference equations. Atici and Uyanik \cite{Atici2015} studied two new monotonicity concepts for a nonnegative or nonpositive valued function defined on a discrete domain, and gave examples to illustrate connections between these new monotonicity concepts and the traditional ones.\\
Nabla discrete fractional operator of DFC is an important operator for the singular differential equation. Discrete fractional solutions of these equations can be obtained by means of this operator. In this paper, we applied this operator to the radial Schr{\"o}dinger equation given by potential $ V(r)=a/r^{2}-b/r+cr^{\rho} $ for different values of $ \rho $.

\section{Preliminaries}
\begin{defin} Riemann-Liouville fractional differentiation and fractional integration are defined by, respectively \cite{Yilmazer},
	\begin{equation}
	_aD_{t}^{\nu}f(t)=[f(t)]_{\nu}=\frac{1}{\Gamma(n-\nu)}\frac{d^{n}}{dt^{n}}\int_a^t\frac{f(s)}{(t-s)^{\nu+1-n}}ds, \label{A}
	\end{equation}
	\[ (n-1\leq\nu<n,n\in\mathbf{N}), \]
	and,
	\begin{equation}
	_aD_{t}^{-\nu}f(t)=[f(t)]_{-\nu}=\frac{1}{\Gamma(\nu)}\int_a^t\frac{f(s)}{(t-s)^{1-\nu}}ds\quad(t>a,\nu>0). \label{B}
	\end{equation}
\end{defin}

\begin{defin} Consider
	\begin{equation}
	t^{\overline{n}}=t(t+1)(t+2)...(t+n-1)\quad(n\in\mathbf{N}),
	\end{equation}
	and $ t^{\overline{0}}=1 $, where $ t^{\overline{n}} $ is the rising factorial power \cite{Graham}, or the ascending factorial \cite{Boros}. The rising factorial power can be defined by means of the Pochhammer symbol \cite{Spanier}.
	Let $ t\in\mathbf{R}\backslash\{...,-2,-1,0\} $ and $ \nu\in\mathbf{R} $ Then \textquotedblleft$ t $ to the $ \alpha $ rising\textquotedblright is given by
	\begin{equation}
	t^{\overline{\nu}}=\frac{\Gamma(t+\nu)}{\Gamma(t)},
	\end{equation}
	where $ 0^{\overline{\nu}}=0 $. Then, we can write equality as
	\begin{equation}
	\nabla(t^{\overline{\nu}})=\nu{t^{\overline{\nu-1}}},
	\end{equation}
	where $ \nabla{U(t)}=U(t)-U(t-1) $ \cite{Atici2009}.
\end{defin}

\begin{defin} Let $ a\in\mathbf{R} $ and $ \nu\in\mathbf{R}^{+} $ such that $ 0<n-1\leq\nu<n (n\in\mathbf{Z}) $.
	The $ \nu $-th order fractional sum of $ U $ is given by
	\begin{equation}
	\nabla_{a}^{-\nu}U(t)=\sum_{s=a}^{t}\frac{[t-\varphi(s)]^{\overline{\nu-1}}}{\Gamma(\nu)}U(s),
	\end{equation}
	where $ t\in\mathbf{N}_{a}=\{a,a+1,a+2,...\},\varphi(t)=t-1 $ is backward jump operator of the time scale calculus.
	The $ \nu $-th order fractional difference of $ U $ is given by
	\begin{equation}
	\nabla_{a}^{\nu}U(t)=\nabla^{n}\nabla_{a}^{-(n-\nu)}U(t)=\nabla^{n}\sum_{s=a}^{t}\frac{[t-\varphi(s)]^{\overline{n-\nu-1}}}{\Gamma(n-\nu)}U(s),
	\end{equation}
	where $ U:\mathbf{N}_{a}\rightarrow\mathbf{R} $ \cite{Acar}.
\end{defin}

\begin{defin} $ E $ shift operator is defined by
	\begin{equation}
	E^{n}U(t)=U(t+n),
	\end{equation}
	where $ n\in\mathbf{N} $ \cite{Kelley}.
\end{defin}

\begin{thm} Let $ \nu,\upsilon>0 $ and $ b,c $ are scalars. Then,
	\begin{equation}
	\nabla^{-\nu}\nabla^{-\upsilon}U(t)=\nabla^{-(\nu+\upsilon)}U(t)=\nabla^{-\upsilon}\nabla^{-\nu}U(t),
	\end{equation}
	\begin{equation}
	\nabla^{\nu}[bU(t)+cY(t)]=b\nabla^{\nu}U(t)+c\nabla^{\nu}Y(t),
	\end{equation}
	\begin{equation}
	\nabla\nabla^{-\nu}U(t)=\nabla^{-(\nu-1)}U(t),
	\end{equation}
	\begin{equation}
	\nabla^{-\nu}\nabla{U(t)}=\nabla^{1-\nu}U(t)-{t+\nu-2 \choose t-1}U(0),
	\end{equation}
	where $ U,Y:\mathbf{N}_{0}\rightarrow\mathbf{R} $ \rm\cite{Yilmazer2016}.
\end{thm}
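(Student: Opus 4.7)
The plan is to treat the four identities separately, since each is a standard building block of discrete fractional calculus and requires a different, but short, manipulation. I would first dispose of the easy parts and then attack the semigroup law, which is the one genuinely substantial identity; the last formula is an Abel/summation-by-parts calculation that produces the binomial boundary term.

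Identity $(2)$ is immediate: by the definition of $\nabla^{\nu}U(t)=\nabla^{n}\nabla^{-(n-\nu)}U(t)$, the operator $\nabla^{\nu}$ is a finite linear combination of evaluations of $U$, so linearity of ordinary summation and of the ordinary backward difference $\nabla$ passes through without change. Identity $(3)$ I would prove by writing $\nabla^{-\nu}U(t)=\sum_{s=0}^{t}\frac{[t-\varphi(s)]^{\overline{\nu-1}}}{\Gamma(\nu)}U(s)$, applying $\nabla$ in the variable $t$, and using the rising-factorial differentiation rule from Definition~2.2, namely $\nabla_{t}[t-\varphi(s)]^{\overline{\nu-1}}=(\nu-1)[t-\varphi(s)]^{\overline{\nu-2}}$. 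The interior sum then collapses to $\sum_{s=0}^{t}\frac{[t-\varphi(s)]^{\overline{\nu-2}}}{\Gamma(\nu-1)}U(s)=\nabla^{-(\nu-1)}U(t)$ after using $(\nu-1)/\Gamma(\nu)=1/\Gamma(\nu-1)$, while the boundary term contributed by the new top index $s=t$ is $[1]^{\overline{\nu-1}}/\Gamma(\nu)\cdot U(t)=U(t)/1$, which is already absorbed into the sum, so no extra term survives.

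For the semigroup law $(1)$, I would expand $\nabla^{-\nu}\nabla^{-\upsilon}U(t)$ as an iterated sum, swap the order of summation, and reduce the inner sum in $r$ (where $s\le r\le t$) to a single factor $[t-\varphi(s)]^{\overline{\nu+\upsilon-1}}/\Gamma(\nu+\upsilon)$ in front of $U(s)$. The identity one needs is the discrete convolution
\begin{equation*}
\sum_{r=s}^{t}\frac{[t-\varphi(r)]^{\overline{\nu-1}}}{\Gamma(\nu)}\cdot\frac{[r-\varphi(s)]^{\overline{\upsilon-1}}}{\Gamma(\upsilon)}=\frac{[t-\varphi(s)]^{\overline{\nu+\upsilon-1}}}{\Gamma(\nu+\upsilon)},
\end{equation*}
which is a Chu--Vandermonde-type identity for rising factorials; I would verify it by induction on $t-s$, the inductive step being precisely identity $(3)$ applied to the shorter sum. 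This telescoping step is where I expect the main technical obstacle, because one must keep track of the shifted argument $\varphi(s)=s-1$ consistently throughout. Commutativity $\nabla^{-\nu}\nabla^{-\upsilon}=\nabla^{-\upsilon}\nabla^{-\nu}$ then follows from symmetry in the identity above.

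Finally, for $(4)$ I would use summation by parts. Writing $\nabla^{-\nu}\nabla U(t)=\sum_{s=0}^{t}\frac{[t-\varphi(s)]^{\overline{\nu-1}}}{\Gamma(\nu)}\bigl(U(s)-U(s-1)\bigr)$ and shifting the index in the $U(s-1)$ part, one obtains two sums whose interior cancels up to $\nabla_{s}[t-\varphi(s)]^{\overline{\nu-1}}$, which by Definition~2.2 is proportional to $[t-s]^{\overline{\nu-2}}$ and reconstructs $\nabla^{1-\nu}U(t)$. The one leftover boundary contribution at $s=0$ is $\frac{[t+1]^{\overline{\nu-2}}}{\Gamma(\nu-1)}U(0)$, and using $t^{\overline{\nu}}=\Gamma(t+\nu)/\Gamma(t)$ this rewrites as $\binom{t+\nu-2}{t-1}U(0)$, giving the stated identity. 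The only care required is the sign bookkeeping and confirming that the upper boundary term $s=t$ vanishes in the shifted sum; everything else is algebra on Gamma-function ratios.
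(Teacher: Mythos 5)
The paper itself offers no proof of this theorem: it is imported verbatim from the cited reference [Yilmazer--Inc--Tchier--Baleanu 2016], so your argument cannot be compared against an in-paper proof, only judged on its own. On that basis, your overall strategy is the standard and correct one: linearity of $\nabla^{\nu}$ is immediate from the definition; the index law follows by interchanging the order of summation and evaluating the inner convolution sum, which is exactly the Power Rule (Lemma 2.6 of the paper) applied with base point $s$ to the kernel $(\,\cdot\,-s+1)^{\overline{\upsilon-1}}$ --- you could invoke that lemma directly rather than re-deriving the Chu--Vandermonde identity by induction, and your proposed inductive step ``via identity (3)'' is the wrong tool anyway, since (3) concerns $\nabla_t$ acting on the outer sum, not the telescoping of the product kernel in the summation variable $r$.

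The one genuine error is in the boundary term of identity (4). After summation by parts the leftover contribution is the value of the \emph{original} kernel at the bottom index, namely
\begin{equation*}
\frac{[t-\varphi(1)]^{\overline{\nu-1}}}{\Gamma(\nu)}\,U(0)=\frac{t^{\overline{\nu-1}}}{\Gamma(\nu)}\,U(0)=\frac{\Gamma(t+\nu-1)}{\Gamma(t)\Gamma(\nu)}\,U(0)=\binom{t+\nu-2}{t-1}U(0),
\end{equation*}
whereas you report it as $\frac{(t+1)^{\overline{\nu-2}}}{\Gamma(\nu-1)}U(0)=\frac{\Gamma(t+\nu-1)}{\Gamma(t+1)\Gamma(\nu-1)}U(0)=\binom{t+\nu-2}{t}U(0)$; these differ by the factor $(\nu-1)/t$ and are not equal, so your formula does not reduce to the stated one. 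The slip comes from applying the differenced kernel (exponent $\nu-2$, normalization $\Gamma(\nu-1)$) to the boundary term, when the boundary term never gets differenced. Relatedly, you should fix the base point before starting: with the sum taken from $s=0$ the expression $\nabla U(0)=U(0)-U(-1)$ is undefined for $U:\mathbf{N}_0\to\mathbf{R}$, and the clean statement (consistent with Lemma 2.7 of the paper, which is essentially identity (4) in disguise with $a=0$) requires the fractional sum on the left to be based at $a+1=1$. Once the base point and the boundary coefficient are corrected, the rest of your Abel-summation computation goes through.
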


\begin{lem}[Power Rule]
	\begin{equation}
	\nabla_{a}^{-\nu}(t-a+1)^{\overline{\upsilon}}=\frac{\Gamma(\upsilon+1)}{\Gamma(\nu+\upsilon+1)}(t-a+1)^{\overline{\nu+\upsilon}}\quad(\forall{t}\in\mathbf{N}_{a}),
	\end{equation}
	where $ \nu,\upsilon\in\mathbf{R} (\nu>0) $ \rm\cite{Acar}.
\end{lem}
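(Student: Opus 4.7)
The plan is to expand the left-hand side directly from the definition of the nabla fractional sum, reindex the summation so that only a single discrete variable (ranging over nonnegative integers) remains, convert every rising factorial into its Gamma-function form, and then recognize the resulting convolution as an instance of the Chu--Vandermonde identity.

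More precisely, I would first write
\[
\nabla_{a}^{-\nu}(t-a+1)^{\overline{\upsilon}}=\sum_{s=a}^{t}\frac{(t-s+1)^{\overline{\nu-1}}}{\Gamma(\nu)}\,(s-a+1)^{\overline{\upsilon}},
\]
using $\varphi(s)=s-1$. Setting $n:=t-a$ and $k:=s-a$, this becomes
\[
\frac{1}{\Gamma(\nu)}\sum_{k=0}^{n}(n-k+1)^{\overline{\nu-1}}\,(k+1)^{\overline{\upsilon}}.
\]
Next I would apply $x^{\overline{\alpha}}=\Gamma(x+\alpha)/\Gamma(x)$ to both rising factorials to rewrite the summand as
\[
\frac{\Gamma(n-k+\nu)}{\Gamma(n-k+1)}\cdot\frac{\Gamma(k+1+\upsilon)}{\Gamma(k+1)},
\]
and re-express each ratio through Pochhammer symbols: $\Gamma(n-k+\nu)/(n-k)!=\Gamma(\nu)(\nu)_{n-k}/(n-k)!$ and $\Gamma(k+1+\upsilon)/k!=\Gamma(\upsilon+1)(\upsilon+1)_k/k!$. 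After pulling the constants $\Gamma(\nu)$ and $\Gamma(\upsilon+1)$ out of the sum and cancelling the $1/\Gamma(\nu)$ prefactor, the identity reduces to
\[
\Gamma(\upsilon+1)\sum_{k=0}^{n}\frac{(\upsilon+1)_{k}}{k!}\,\frac{(\nu)_{n-k}}{(n-k)!}.
\]

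The crux is then the Chu--Vandermonde convolution
\[
\sum_{k=0}^{n}\frac{(\upsilon+1)_{k}}{k!}\,\frac{(\nu)_{n-k}}{(n-k)!}=\frac{(\nu+\upsilon+1)_{n}}{n!},
\]
which I would invoke (it is a classical identity on rising factorials, equivalent to a binomial convolution). Substituting back, the right-hand side becomes
\[
\Gamma(\upsilon+1)\cdot\frac{\Gamma(n+\nu+\upsilon+1)}{\Gamma(\nu+\upsilon+1)\,n!}=\frac{\Gamma(\upsilon+1)}{\Gamma(\nu+\upsilon+1)}\cdot\frac{\Gamma(t-a+1+\nu+\upsilon)}{\Gamma(t-a+1)},
\]
and the last quotient is exactly $(t-a+1)^{\overline{\nu+\upsilon}}$, proving the lemma.

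The only nontrivial step is the Chu--Vandermonde identity; everything else is bookkeeping with the definitions. If one prefers to avoid citing Chu--Vandermonde, an alternative is a short induction on $n=t-a$: the base case $n=0$ is immediate from $(1)^{\overline{\nu-1}}=\Gamma(\nu)$, and the induction step follows by applying the nabla difference (via the identity $\nabla(t^{\overline{\nu}})=\nu t^{\overline{\nu-1}}$ together with $\nabla\nabla^{-\nu}U=\nabla^{-(\nu-1)}U$ from Theorem~2.1) to reduce $\nu$ by one. Either route is short; the convolution proof makes the structure most transparent.
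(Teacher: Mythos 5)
The paper does not actually prove this lemma: it is stated with only the citation to Acar--Atici, so there is no in-paper argument to compare yours against. Your main computation is correct and self-contained. Starting from Definition 2.3 with $t-\varphi(s)=t-s+1$, the reindexing by $n=t-a$, $k=s-a$ is right; the conversions $(n-k+1)^{\overline{\nu-1}}=\Gamma(n-k+\nu)/(n-k)!$ and $(k+1)^{\overline{\upsilon}}=\Gamma(k+\upsilon+1)/k!$ are right; the extraction of $\Gamma(\nu)$ and $\Gamma(\upsilon+1)$ via Pochhammer symbols cancels the $1/\Gamma(\nu)$ prefactor exactly as you say; and the Chu--Vandermonde convolution $\sum_{k=0}^{n}\frac{(\upsilon+1)_k}{k!}\frac{(\nu)_{n-k}}{(n-k)!}=\frac{(\nu+\upsilon+1)_n}{n!}$ (a polynomial identity in the parameters, so valid for real $\nu,\upsilon$) delivers precisely $\frac{\Gamma(\upsilon+1)}{\Gamma(\nu+\upsilon+1)}\cdot\frac{\Gamma(n+\nu+\upsilon+1)}{n!}=\frac{\Gamma(\upsilon+1)}{\Gamma(\nu+\upsilon+1)}(t-a+1)^{\overline{\nu+\upsilon}}$. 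I checked the base case $n=0$ as well: both sides equal $\Gamma(\upsilon+1)$, using $(1)^{\overline{\nu-1}}=\Gamma(\nu)$.

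One small caution about your closing aside: the proposed ``induction on $n=t-a$'' is not really an induction on $n$. Applying $\nabla$ and the identity $\nabla\nabla^{-\nu}U=\nabla^{-(\nu-1)}U$ reduces the \emph{order} $\nu$ by one, not the time index, so what you sketch is an induction on $\lceil\nu\rceil$ that still needs the case $\nu\in(0,1]$ as an anchor (and a telescoping step to recover the function from its nabla difference using the value at $t=a$). That anchor case essentially forces you back to the convolution computation, so the alternative is not genuinely independent. This does not affect your main proof, which stands on its own and supplies a derivation the paper delegates entirely to the reference.
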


\begin{lem}
	Let $ U $ is defined on $ \mathbf{N}_{a} $. The following equality holds \rm\cite{Acar}:
	\begin{equation}
	\nabla_{a+1}^{-\nu}\nabla{U(t)}=\nabla\nabla_{a}^{-\nu}U(t)-\frac{(t-a+1)^{\overline{\nu-1}}}{\Gamma(\nu)}U(a)\quad(\nu>0).
	\end{equation}
\end{lem}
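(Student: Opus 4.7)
The plan is to verify the identity by expanding both sides directly from the definition of the $\nu$-th order fractional sum and matching them term by term; this is morally a discrete integration by parts in which the claimed correction $-\frac{(t-a+1)^{\overline{\nu-1}}}{\Gamma(\nu)}U(a)$ plays the role of a boundary contribution at $s=a$. For brevity I would introduce the kernel $K(t,s):=\frac{(t-s+1)^{\overline{\nu-1}}}{\Gamma(\nu)}=\frac{[t-\varphi(s)]^{\overline{\nu-1}}}{\Gamma(\nu)}$, so that $\nabla_{a}^{-\nu}U(t)=\sum_{s=a}^{t}K(t,s)\,U(s)$ and the asserted correction is $K(t,a)\,U(a)$.

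Starting from the left-hand side, I would unpack $\nabla U(s)=U(s)-U(s-1)$ to write
\[
\nabla_{a+1}^{-\nu}\nabla U(t)=\sum_{s=a+1}^{t}K(t,s)\,U(s)-\sum_{s=a+1}^{t}K(t,s)\,U(s-1).
\]
In the second sum I would reindex $r=s-1$ and then invoke the key kernel identity $K(t,r+1)=K(t-1,r)$, which is immediate from $(t-(r+1)+1)^{\overline{\nu-1}}=(t-1-r+1)^{\overline{\nu-1}}$. Peeling off the $r=a$ term from the reindexed sum yields
\[
\nabla_{a+1}^{-\nu}\nabla U(t)=\sum_{s=a+1}^{t}K(t,s)\,U(s)-\sum_{s=a+1}^{t-1}K(t-1,s)\,U(s)-K(t-1,a)\,U(a).
\]

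For the right-hand side, I would expand $\nabla\nabla_{a}^{-\nu}U(t)=\nabla_{a}^{-\nu}U(t)-\nabla_{a}^{-\nu}U(t-1)$ in the same notation, split off the $s=a$ contributions of both sums, and then subtract the correction $K(t,a)\,U(a)$. The two $K(t,a)\,U(a)$ pieces cancel and what remains is precisely the expression obtained above for the left-hand side, proving the lemma.

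The only real obstacle is careful bookkeeping at the endpoints: the left-hand side starts its outer sum at $s=a+1$ (so $U(a)$ appears only after the $r=s-1$ reindexing), whereas the right-hand side involves $\nabla_{a}^{-\nu}$ (which sees $s=a$ directly) together with the explicit correction, so one must track that $K(t,a)-K(t,a)$ cancels and that the surviving boundary contribution on the right is exactly $-K(t-1,a)\,U(a)$, matching the leftover on the left. Beyond this, the argument uses nothing more than the definition of the nabla fractional sum, the backward-difference definition $\nabla U(s)=U(s)-U(s-1)$, and the kernel shift identity $K(t,s+1)=K(t-1,s)$.
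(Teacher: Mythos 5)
Your argument is correct, but note that the paper itself offers no proof of this lemma: it is quoted verbatim from Acar and Atici \cite{Acar} and used as a black box, so there is nothing in the text to compare against. Your direct verification from the definition is sound: the kernel shift $K(t,s+1)=K(t-1,s)$ is the trivial identity $(t-(s+1)+1)^{\overline{\nu-1}}=((t-1)-s+1)^{\overline{\nu-1}}$, and the endpoint bookkeeping works out exactly as you describe, with the two $K(t,a)U(a)$ terms on the right cancelling and the residual boundary term $-K(t-1,a)U(a)$ matching the leftover from the reindexed sum on the left. One small streamlining you could adopt: after reindexing, the second sum on the left is precisely
\begin{equation*}
\sum_{r=a}^{t-1}K(t-1,r)\,U(r)=\nabla_{a}^{-\nu}U(t-1),
\end{equation*}
so the left-hand side collapses immediately to $\nabla_{a}^{-\nu}U(t)-K(t,a)U(a)-\nabla_{a}^{-\nu}U(t-1)=\nabla\nabla_{a}^{-\nu}U(t)-\frac{(t-a+1)^{\overline{\nu-1}}}{\Gamma(\nu)}U(a)$, which finishes the proof in one line and avoids having to expand the right-hand side separately. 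You might also state explicitly that the identity is asserted for $t\in\mathbf{N}_{a+1}$, since for $t=a$ the sum defining the left-hand side is empty.
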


\begin{lem}[Leibniz Rule]
	Let $ U(t) $ and $ Y(t) $ are defined on $ \mathbf{N}_{0} $. The $ \nu $-th order fractional difference of the product $ UY $ is defined by \rm\cite{Mohan2013}
	\begin{equation}
	\nabla_{0}^{\nu}(UY)(t)=\sum_{n=0}^{t}{\nu \choose n}[\nabla_{0}^{\nu-n}U(t-n)][\nabla^{n}Y(t)],\quad(\nu>0,t\in\mathbf{Z}^{+}),
	\end{equation}
	where $ {\nu \choose n}=\frac{\Gamma(\nu+1)}{\Gamma(\nu+1-n)n!} $.
\end{lem}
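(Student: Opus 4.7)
My approach parallels the standard proof of the Leibniz rule in continuous fractional calculus, adapted to the $\nabla$-discrete setting. The idea is to expand $Y(s)$ in a backward Taylor-type series about $t$, substitute this into the fractional-sum representation of $\nabla_0^\nu(UY)(t)$, and swap the order of summation so that the inner sum collapses to a shifted fractional difference of $U$.

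First I would establish the finite identity
\[
Y(s) \;=\; \sum_{k=0}^{t-s}(-1)^{k}\binom{t-s}{k}\nabla^{k}Y(t), \qquad 0 \le s \le t,
\]
by induction on $t-s$, using $\nabla^{k}Y(t-1) = \nabla^{k}Y(t) - \nabla^{k+1}Y(t)$ together with Pascal's rule. Next, via Definition~2.3, I would write $\nabla_0^{\nu} = \nabla^{n}\nabla_0^{-(n-\nu)}$ with $n = \lceil \nu \rceil$, insert the expansion into
\[
\nabla_0^{-(n-\nu)}(UY)(t) \;=\; \sum_{s=0}^{t}\frac{(t-s+1)^{\overline{n-\nu-1}}}{\Gamma(n-\nu)}\,U(s)\,Y(s),
\]
and interchange the two summations over the triangular region $0 \le s \le t-k$, $0 \le k \le t$, pulling $\nabla^{k}Y(t)$ out of the inner sum.

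The main step, and the main obstacle, is the algebraic simplification of the resulting inner sum. The rising-factorial identity
\[
\binom{t-s}{k}(t-s+1)^{\overline{n-\nu-1}} \;=\; \frac{1}{k!}\bigl((t-k)-s+1\bigr)^{\overline{n-\nu+k-1}},
\]
which follows from $x^{\overline{p}}=\Gamma(x+p)/\Gamma(x)$, recasts the inner sum as $\Gamma(n-\nu+k)/(k!\,\Gamma(n-\nu))$ times $\nabla_0^{-(n-\nu+k)}U(t-k)$. Applying the outer $\nabla^{n}$ and repeatedly invoking Theorem~2.1 (the semigroup law for fractional sums and the relation $\nabla\nabla^{-\mu}=\nabla^{-(\mu-1)}$) then reduces this to $\nabla_0^{\nu-k}U(t-k)$, while the accumulated Gamma-function prefactor coincides with $\binom{\nu}{k} = \Gamma(\nu+1)/(\Gamma(\nu+1-k)\,k!)$. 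Relabelling $k \to n$ yields the stated formula.

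The delicate points throughout are the correct tracking of the shift of argument from $t$ to $t-k$ inside the fractional sum, ensuring that the $n$-fold operator $\nabla^{n}$ commutes properly with the (now $k$-dependent) summation in $s$ despite the moving upper limit, and verifying that all Gamma-function ratios collapse exactly to $\binom{\nu}{k}$. Lemma~2.3 would be the natural tool for handling boundary terms that arise when $\nabla^{n}$ interacts with the variable endpoint of the $s$-sum.
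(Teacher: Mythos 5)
The paper itself contains no proof of this lemma: it is imported verbatim from the cited reference \cite{Mohan2013}, so there is no in-paper argument to compare yours against. Judged on its own, your plan follows the standard Osler-type route, and its first half is sound: the expansion $Y(s)=\sum_{k}(-1)^{k}\binom{t-s}{k}\nabla^{k}Y(t)$ is just $Y(t-m)=(1-\nabla)^{m}Y(t)$; the interchange of summation over the triangle is harmless because all sums are finite; and your rising-factorial identity is correct, since with $\mu=n-\nu$ both sides equal $\Gamma(t-s+\mu)/\bigl(k!\,\Gamma(t-s-k+1)\bigr)$. Carried through, this legitimately produces the negative-order Leibniz rule $\nabla_{0}^{-\mu}(UY)(t)=\sum_{k}\binom{-\mu}{k}[\nabla_{0}^{-\mu-k}U(t-k)][\nabla^{k}Y(t)]$.

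The gap is in the final step. You assert that applying the outer $\nabla^{n}$ ``reduces $\nabla_{0}^{-(n-\nu+k)}U(t-k)$ to $\nabla_{0}^{\nu-k}U(t-k)$'' while the factor $\nabla^{k}Y(t)$ rides along unchanged. That is false as stated: $\nabla^{n}$ acts on the \emph{product} of the two factors, and $\nabla$ does not commute with multiplication by $\nabla^{k}Y(t)$. To finish, you must distribute $\nabla^{n}$ over each product via the iterated integer-order rule $\nabla(fg)(t)=f(t)\nabla g(t)+g(t-1)\nabla f(t)$, which generates a double sum with coefficients $\binom{n}{j}\binom{\nu-n}{k-j}$ and consistently shifted arguments, and then recombine using the Vandermonde convolution $\sum_{j}\binom{n}{j}\binom{\nu-n}{k-j}=\binom{\nu}{k}$. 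It is this identity --- not the semigroup law of Theorem 2.1 --- that makes your ``accumulated Gamma-function prefactor'' collapse to $\binom{\nu}{k}$, and it is absent from your sketch. You also leave unexecuted the treatment of the boundary terms produced when $\nabla$ hits the moving upper limit $k\le t$ of the outer sum (you point to Lemma 2.3 as the tool but do not apply it). With those two ingredients supplied the argument closes; without them it does not.
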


\begin{lem}
	Let $ U(t) $ is analytic and single-valued. The following equality holds \rm\cite{Yilmazer}:
	\begin{equation}
	[U_{\nu}(t)]_{\upsilon}=U_{\nu+\upsilon}(t)=[U_{\upsilon}(t)]_{\nu}\quad(\nu,\upsilon\in\mathbf{R},t\in\mathbf{N},U_{\nu}(t)\neq0,U_{\upsilon}(t)\neq0,
	\end{equation}
	where $ U_{\nu}=d^{\nu}U/dt^{\nu} $.
\end{lem}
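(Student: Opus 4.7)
The plan is to establish the semigroup law by first proving it in the case of pure fractional integration, where a direct double-integral argument applies, and then using analyticity and single-valuedness of $U$ to extend it to arbitrary signs of $\nu$ and $\upsilon$.

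First I would treat the case $\nu=-\mu$, $\upsilon=-\lambda$ with $\mu,\lambda>0$. Expanding $[U_{-\lambda}(t)]_{-\mu}$ by definition~\eqref{B} gives the iterated integral
\begin{equation*}
\frac{1}{\Gamma(\mu)\Gamma(\lambda)}\int_a^t (t-s)^{\mu-1}\int_a^s (s-r)^{\lambda-1}U(r)\,dr\,ds.
\end{equation*}
Since $U$ is analytic, the integrand is absolutely integrable over the triangle $\{a\le r\le s\le t\}$, so Fubini's theorem permits swapping the order of integration. The inner integral in $s$, after the substitution $s=r+u(t-r)$, becomes $B(\mu,\lambda)(t-r)^{\mu+\lambda-1}$, and the identity $\Gamma(\mu)\Gamma(\lambda)B(\mu,\lambda)=\Gamma(\mu+\lambda)$ collapses the expression to $U_{-(\mu+\lambda)}(t)$. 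The manifest symmetry in $(\mu,\lambda)$ of the original double integral yields commutativity, completing this case.

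Next I would extend to arbitrary real $\nu,\upsilon$. If $\nu\ge 0$, write $\nu=m-\alpha$ with $m\in\mathbf{N}$ and $0\le\alpha<1$, so that $U_\nu(t)=(d^m/dt^m)U_{-\alpha}(t)$ by~\eqref{A}; if $\nu<0$ this reduces to $m=0$. Analyticity and single-valuedness of $U$ allow $d/dt$ to pass through a Riemann--Liouville integral modulo boundary contributions at $t=a$. Rewriting both $[U_\nu]_\upsilon$ and $[U_\upsilon]_\nu$ as integer derivatives of fractional integrals then reduces each composition to the purely integrational case of the previous paragraph; since the resulting intermediate double-integral expression is symmetric in $(\nu,\upsilon)$, both equalities of the lemma follow simultaneously.

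The main obstacle is precisely the commutation of $d/dt$ with the fractional integration operator: $[\cdot]_\nu$ is sensitive to boundary behaviour at its lower limit, and successive integer differentiations of $U_{-\sigma}$ produce boundary terms involving powers $(t-a)^{\sigma-k}$ that must cancel when the two compositions are compared. Single-valuedness of $U$ is what guarantees that the relevant branches of $(t-s)^{\sigma-1}$ appearing in the contour representation of $[\cdot]_\nu$ are well-defined, and analyticity ensures that $U$ admits a convergent power series about $a$ so that the boundary contributions sum symmetrically in $\nu$ and $\upsilon$. Once this cancellation is verified, the remaining manipulations are routine applications of the Beta integral established in the first step.
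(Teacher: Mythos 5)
The paper itself gives no proof of this lemma: it is quoted verbatim from \cite{Yilmazer}, where it belongs to the axiomatic toolkit of the Nishimoto-type fractional differintegral (note the hypotheses $U_{\nu}(t)\neq 0$, $U_{\upsilon}(t)\neq 0$, which are the standard exclusions in that framework and which your argument never uses). So there is nothing in the paper to match your proof against, and your proposal has to stand on its own. Its first half does: the composition of two fractional \emph{integrals} via Fubini over the triangle $\{a\le r\le s\le t\}$ and the Beta integral is the correct and standard argument, and the symmetry of the double integral does give commutativity in that case.

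The genuine gap is in the second half, and it is exactly where you wave your hands: the claim that the boundary terms produced by pushing $d^{m}/dt^{m}$ through the Riemann--Liouville integral ``must cancel when the two compositions are compared.'' For Riemann--Liouville operators the index law is \emph{false} in general: one has $[U_{\upsilon}(t)]_{\nu}=U_{\nu+\upsilon}(t)-\sum_{j=1}^{n}[U_{\upsilon-j}(t)]_{t=a}\,\frac{(t-a)^{-\nu-j}}{\Gamma(1-\nu-j)}$ with $n-1\le\upsilon<n$, and the correction terms vanish only if $[U_{\upsilon-j}(t)]_{t=a}=0$ for all $j$. Analyticity of $U$ handles this when $0<\upsilon<1$ (since then $U_{\upsilon-j}$ is a fractional integral of a continuous function), but already for non-integer $\upsilon>1$ the quantity $U_{\upsilon-1}$ behaves like $U(a)(t-a)^{1-\upsilon}/\Gamma(2-\upsilon)$ near $a$ and does not vanish --- indeed it blows up --- unless $U(a)=0$. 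So the asserted symmetric cancellation is not a ``routine'' verification; it is the whole content of the lemma, and in the Riemann--Liouville setting it simply does not hold for a generic analytic $U$. Either you must add hypotheses (vanishing of $U$ and enough derivatives at $a$, or restrict the ranges of $\nu,\upsilon$), or you must work in the contour-integral (Nishimoto) calculus that \cite{Yilmazer} actually intends, where the index law is established for a different operator by a different (Cauchy-formula) argument. As written, the second paragraph of your proposal asserts the conclusion rather than proving it.
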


\section{Main Results}
The radial Schr{\"o}dinger equation for the potential $ V(r)=a/r^{2}-b/r+cr^{\rho} $ is given by \cite{Aygun}
\begin{equation}
R_{2}(r)+\frac{2m}{\hbar^{2}}\Big[\epsilon-\frac{a}{r^{2}}+\frac{b}{r}-cr^{\rho}-\frac{\ell(\ell+1)\hbar^{2}}{2mr^{2}}\Big]R(r)=0,
\end{equation}
where $ a,b $ and $ c $ are positive constants. If we get equalities as
\begin{equation}
-\alpha^{2}=\frac{2m\epsilon}{\hbar^{2}},\quad\beta=\frac{2mb}{\hbar^{2}},\quad\gamma=\frac{2mc}{\hbar^{2}},\quad\delta=\frac{2ma}{\hbar^{2}}+\ell(\ell+1),
\end{equation}
then, we obtain
\begin{equation}
r^{2}R_{2}(r)-(\alpha^{2}r^{2}-\beta{r}+\gamma{r^{\rho+2}}+\delta)R(r)=0. \label{C}
\end{equation}
\begin{thm}\label{thm3.1}
	Let $ R\in\{R:0\neq\mid R_{\nu} \mid<\infty,\nu\in\mathbf{R}\} $ and $ \rho=0 $ in Eq. (\ref{C}). So,
	\begin{equation}
	r^{2}R_{2}(r)-\Big[(\alpha^{2}+\gamma)r^{2}-\beta{r}+\delta\Big]R(r)=0, \label{D}
	\end{equation}
	has particular solutions as follows:
	\[ R^{I}=Ae^{\eta{r}}r^{(\frac{1+\tau}{2})}\Big[e^{-2\eta{r}}r^{-(1+\tau+\mathrm{a})}\Big]_{-(1+\mathrm{a}E^{-1})}, \]
	\[ R^{II}=Be^{-\eta{r}}r^{(\frac{1+\tau}{2})}\Big[e^{2\eta{r}}r^{-(1+\tau+\mathrm{b})}\Big]_{-(1+\mathrm{b}E^{-1})}, \]
	\[ R^{III}=Ce^{\eta{r}}r^{(\frac{1-\tau}{2})}\Big[e^{-2\eta{r}}r^{-(1-\tau+\mathrm{c})}\Big]_{-(1+\mathrm{c}E^{-1})}, \]
	\[ R^{IV}=De^{-\eta{r}}r^{(\frac{1-\tau}{2})}\Big[e^{2\eta{r}}r^{-(1-\tau+\mathrm{d})}\Big]_{-(1+\mathrm{d}E^{-1})}, \]
	where $ A,B,C $ and $ D $ are constants.
\end{thm}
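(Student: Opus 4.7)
The plan is to reduce Eq.~(\ref{D}) to a confluent, Kummer-type equation via an exponential--power substitution, and then to verify by direct substitution that each of the four proposed expressions $R^{I},\ldots,R^{IV}$ satisfies that reduced equation, using the Leibniz rule (Lemma~2.8), the power rule (Lemma~2.6), and the index law for iterated fractional differentiation (Lemma~2.9).

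First I would set $\eta^{2}=\alpha^{2}+\gamma$ and $\tau^{2}=1+4\delta$, and substitute
\[
R(r)=e^{\pm\eta r}\, r^{(1\pm\tau)/2}\,\phi(r)
\]
into (\ref{D}). Expanding $R_{2}$ by the product rule and collecting by powers of $r$, the $r^{2}$-coefficient vanishes because $\eta^{2}=\alpha^{2}+\gamma$, and the constant coefficient vanishes because $\mu=(1\pm\tau)/2$ solves $\mu(\mu-1)=\delta$. After dividing through by $r$, the equation for $\phi$ becomes
\[
r\,\phi_{2}(r)+(2\mu\pm2\eta r)\,\phi_{1}(r)+(\pm2\eta\mu+\beta)\,\phi(r)=0.
\]
The four sign patterns in $(\pm\eta,\pm\tau)$ correspond exactly to the four cases $R^{I},R^{II},R^{III},R^{IV}$ in the statement.

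Next, for each sign pattern I would make the fractional ansatz
\[
\phi(r)=\bigl[e^{\mp2\eta r}\,r^{-(1\pm\tau+\mathrm{x})}\bigr]_{-(1+\mathrm{x}E^{-1})}\qquad (\mathrm{x}\in\{\mathrm{a},\mathrm{b},\mathrm{c},\mathrm{d}\}),
\]
differentiate once and twice, and expand those derivatives using the Leibniz rule with $u=e^{\mp2\eta r}$ and $v=r^{-(1\pm\tau+\mathrm{x})}$. Because $u_{k}=(\mp2\eta)^{k}u$ and $v_{k}$ is handled by the R--L power rule, Lemma~2.9 allows one to recombine the orders so that every term in the resulting series is a scalar multiple of the ansatz, possibly composed with the shift $E^{-1}$. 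Substituting back into the equation for $\phi$ and collecting coefficients produces a single linear relation, schematically $(\mp2\eta)\mathrm{x}=\pm2\eta\mu+\beta$, which pins down $\mathrm{a},\mathrm{b},\mathrm{c},\mathrm{d}$ uniquely; restoring the prefactor $e^{\pm\eta r}r^{(1\pm\tau)/2}$ with free scalars $A,B,C,D$ then recovers the four stated particular solutions.

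The main obstacle is interpreting and manipulating the operator-valued fractional order $-(1+\mathrm{x}E^{-1})$: one has to show that the formal series generated by the Leibniz rule, in which the shift operator $E^{-1}$ bookkeeps the successive drops in the argument of $r^{-(1\pm\tau+\mathrm{x})}$, actually closes into the compact fractional integral of ``shifted order'' written in the theorem. Once that identification is pushed through, the remainder of the argument is routine Pochhammer/Gamma arithmetic, justified by Lemmas~2.6 and~2.9, and the four formulas drop out by symmetry under the four sign choices.
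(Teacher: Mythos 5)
Your opening reduction coincides with the paper's: the single substitution $R=e^{\pm\eta r}r^{(1\pm\tau)/2}\phi$ just telescopes the paper's two steps $R=r^{\lambda}U$ (with $\lambda(\lambda-1)=\delta$) and $U=e^{\sigma r}Y$ (with $\sigma^{2}=\alpha^{2}+\gamma$), and your confluent equation $r\phi_{2}+(2\mu\pm2\eta r)\phi_{1}+(\pm2\eta\mu+\beta)\phi=0$ and determining relation $(\mp2\eta)\mathrm{x}=\pm2\eta\mu+\beta$ reproduce the paper's equation and its values of $\mathrm{a},\mathrm{b},\mathrm{c},\mathrm{d}$. The gap is in how you then treat the fractional ansatz. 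You propose to \emph{verify} $\phi=[e^{\mp2\eta r}r^{-(1\pm\tau+\mathrm{x})}]_{-(1+\mathrm{x}E^{-1})}$ by direct substitution, expanding its derivatives by the Leibniz rule with $u=e^{\mp2\eta r}$ and $v=r^{-(1\pm\tau+\mathrm{x})}$. Since neither factor is a polynomial, that Leibniz series does not terminate, and its general term $ {\nu \choose n}[\nabla^{\nu-n}u(t-n)][\nabla^{n}v(t)] $ is a genuinely different function for each $n$ --- not ``a scalar multiple of the ansatz composed with $E^{-1}$'' as you assert. Lemma 2.9 only adds orders of differentiation of a single function; it does not recombine these product terms, and $u_{k}=(\mp2\eta)^{k}u$ is a continuous-derivative identity that does not transfer to $\nabla^{k}$. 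You correctly flag this closure problem as ``the main obstacle,'' but you never resolve it, and it is precisely the computation the method is built to avoid.

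The paper runs the argument in the opposite direction: it applies $\nabla^{\nu}$ to the confluent \emph{equation} rather than to a candidate solution. There every product being differentiated has the degree-one polynomial $r$ as one factor, so the Leibniz expansion terminates after two terms and gives Eq.~(\ref{G}); choosing the operator-valued order $\nu=\mathrm{a}E^{-1}$ so that the coefficient of $Y_{\nu}$ vanishes collapses the problem to the first-order linear ODE $\varphi_{1}+[2\eta+(1+\tau+\mathrm{a})r^{-1}]\varphi=0$ for $\varphi=[Y]_{1+\mathrm{a}E^{-1}}$, which is solved in closed form and then inverted by $[\,\cdot\,]_{-(1+\mathrm{a}E^{-1})}$. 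To repair your version, either prove the closure of the non-terminating Leibniz series (in effect re-deriving the confluent hypergeometric series identity), or, more simply, apply $[\,\cdot\,]_{1+\mathrm{x}E^{-1}}$ to your candidate $\phi$ and check the resulting first-order equation --- which is the paper's proof read backwards.
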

\begin{proof}
	At first, we suppose that $ R(r)=r^{\lambda}U(r) $, and then, we have
	\begin{equation}
	rU_{2}(r)+2\lambda{U_{1}(r)}+\Big[-(\alpha^{2}+\gamma)r+\beta+\Big(\lambda(\lambda-1)-\delta\Big)r^{-1}\Big]U(r)=0.
	\end{equation}
	If we get $ \lambda(\lambda-1)-\delta=0 $, so, we obtain $ \lambda=\frac{1\pm\tau}{2} $, where $ \tau=\sqrt{1+4\delta} $.\\
	Let $ \lambda=\frac{1+\tau}{2} $. Therefore, we write equation as
	\begin{equation}
	rU_{2}(r)+(1+\tau){U_{1}(r)}+\Big[-(\alpha^{2}+\gamma)r+\beta\Big]U(r)=0.
	\end{equation}
	After, consider $ U(r)=e^{\sigma{r}}Y(r) $, and so,
	\begin{equation}
	rY_{2}(r)+(2\sigma{r}+1+\tau)Y_{1}(r)+\Big[\Big(\sigma^{2}-(\alpha^{2}+\gamma)\Big)r+\sigma(1+\tau)+\beta\Big]Y(r)=0. \label{E}
	\end{equation}
	We choose $ \sigma $ such that $ \sigma^{2}-(\alpha^{2}+\gamma)=0 $ in Eq. (\ref{E}), that is, $ \sigma=\pm\eta $, where $ \eta=\alpha^{2}+\gamma $. For $ \sigma=\eta $, we obtain
	\begin{equation}
	rY_{2}(r)+(2\eta{r}+1+\tau)Y_{1}(r)+\Big[\eta(1+\tau)+\beta\Big]Y(r)=0. \label{F}
	\end{equation}
	Now, by applying $ \nabla^{\nu} $ discrete fractional operator to the both sides of Eq. (\ref{F}), we have
	\begin{equation}
	rY_{2+\nu}(r)+(\nu{E}+2\eta{r}+1+\tau)Y_{1+\nu}(r)+\Big[\eta(2\nu{E}+1+\tau)+\beta\Big]Y_{\nu}(r)=0, \label{G}
	\end{equation}
	where $ E $ is shift operator. Here, we suppose that $ \eta(2\nu{E}+1+\tau)+\beta=0 $, that is, $ \nu=\mathrm{a}E^{-1} $, where $ \mathrm{a}=-\Big(\frac{\beta+\eta(1+\tau)}{2\eta}\Big) $. So, we obtain a first-order homogeneous linear ordinary differential equation as follows:
	\begin{equation}
	\varphi_{1}(r)+\Big[2\eta+(1+\tau+\mathrm{a})r^{-1}\Big]\varphi(r)=0, \label{H}
	\end{equation}
	where $ [Y(r)]_{(1+\mathrm{a}E^{-1})}=\varphi(r) $ and $ Y(r)=[\varphi(r)]_{-(1+\mathrm{a}E^{-1})} $. We have the solution of Eq. (\ref{H}) as
	\begin{equation}
	\varphi(r)=Ae^{-2\eta{r}}r^{-(1+\tau+\mathrm{a})},
	\end{equation}
	and finally, we find a discrete fractional solution of the Eq. (\ref{D}) as follows:
	\begin{equation}
	R(r)=Ae^{\eta{r}}r^{(\frac{1+\tau}{2})}\Big[e^{-2\eta{r}}r^{-(1+\tau+\mathrm{a})}\Big]_{-(1+\mathrm{a}E^{-1})}.
	\end{equation}
	Similarly, we obtain other discrete fractional solutions as follows:
	\begin{equation}
	R(r)=Be^{-\eta{r}}r^{(\frac{1+\tau}{2})}\Big[e^{2\eta{r}}r^{-(1+\tau+\mathrm{b})}\Big]_{-(1+\mathrm{b}E^{-1})}\quad\Bigg(\mathrm{b}=\Big(\frac{\beta-\eta(1+\tau)}{2\eta}\Big)\Bigg),
	\end{equation}
	\begin{equation}
	R(r)=Ce^{\eta{r}}r^{(\frac{1-\tau}{2})}\Big[e^{-2\eta{r}}r^{-(1-\tau+\mathrm{c})}\Big]_{-(1+\mathrm{c}E^{-1})}\quad\Bigg(\mathrm{c}=-\Big(\frac{\beta+\eta(1-\tau)}{2\eta}\Big)\Bigg),
	\end{equation}
	\begin{equation}
	R(r)=De^{-\eta{r}}r^{(\frac{1-\tau}{2})}\Big[e^{2\eta{r}}r^{-(1-\tau+\mathrm{d})}\Big]_{-(1+\mathrm{d}E^{-1})}\quad\Bigg(\mathrm{d}=\Big(\frac{\beta-\eta(1-\tau)}{2\eta}\Big)\Bigg).
	\end{equation}
\end{proof}

We can write the following theorems with the implementation of similar steps:
\begin{thm}\label{thm3.2}
	Let $ R\in\{R:0\neq\mid R_{\nu} \mid<\infty,\nu\in\mathbf{R}\} $ and $ \rho=-1 $ in Eq. (\ref{C}). So,
	\[ r^{2}R_{2}(r)-\Big[\alpha^{2}r^{2}+(\gamma-\beta)r+\delta\Big]R(r)=0, \]
	has particular solutions as follows:
	\[ R^{I}=Ae^{\alpha{r}}r^{(\frac{1+\tau}{2})}\Big[e^{-2\alpha{r}}r^{-(1+\tau+\mathrm{a})}\Big]_{-(1+\mathrm{a}E^{-1})}\quad\Bigg(\mathrm{a}=-\Big(\frac{\beta-\gamma+\alpha(1+\tau)}{2\alpha}\Big)\Bigg), \]
	\[ R^{II}=Be^{-\alpha{r}}r^{(\frac{1+\tau}{2})}\Big[e^{2\alpha{r}}r^{-(1+\tau+\mathrm{b})}\Big]_{-(1+\mathrm{b}E^{-1})}\quad\Bigg(\mathrm{b}=\Big(\frac{\beta-\gamma-\alpha(1+\tau)}{2\alpha}\Big)\Bigg), \]
	\[ R^{III}=Ce^{\alpha{r}}r^{(\frac{1-\tau}{2})}\Big[e^{-2\alpha{r}}r^{-(1-\tau+\mathrm{c})}\Big]_{-(1+\mathrm{c}E^{-1})}\quad\Bigg(\mathrm{c}=-\Big(\frac{\beta-\gamma+\alpha(1-\tau)}{2\alpha}\Big)\Bigg), \]
	\[ R^{IV}=De^{-\alpha{r}}r^{(\frac{1-\tau}{2})}\Big[e^{2\alpha{r}}r^{-(1-\tau+\mathrm{d})}\Big]_{-(1+\mathrm{d}E^{-1})}\quad\Bigg(\mathrm{d}=\Big(\frac{\beta-\gamma-\alpha(1-\tau)}{2\alpha}\Big)\Bigg), \]
	where $ \lambda=\frac{1\pm\tau}{2},\tau=\sqrt{1+4\delta},\sigma=\pm\alpha $ and $ A,B,C,D $ are constants.
\end{thm}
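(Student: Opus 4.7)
The plan is to mimic the proof of Theorem \ref{thm3.1} step for step, with the only bookkeeping change being that setting $\rho=-1$ in Eq.\ (\ref{C}) merges the $\gamma r^{\rho+2}=\gamma r$ term with the $-\beta r$ term, so the coefficient $\beta$ that appeared everywhere in the previous proof is replaced by $\beta-\gamma$, and the coefficient $\alpha^{2}+\gamma$ collapses to $\alpha^{2}$. Thus the target equation is
\[
r^{2}R_{2}(r)-\bigl[\alpha^{2}r^{2}+(\gamma-\beta)r+\delta\bigr]R(r)=0.
\]

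First I would introduce $R(r)=r^{\lambda}U(r)$. A direct computation turns the equation into
\[
rU_{2}+2\lambda U_{1}+\bigl[-\alpha^{2}r+(\beta-\gamma)+\bigl(\lambda(\lambda-1)-\delta\bigr)r^{-1}\bigr]U=0,
\]
so choosing $\lambda(\lambda-1)=\delta$, i.e.\ $\lambda=\tfrac{1\pm\tau}{2}$ with $\tau=\sqrt{1+4\delta}$, eliminates the singular $r^{-1}$ term. Fixing the $+$ branch, $\lambda=\tfrac{1+\tau}{2}$, and then substituting $U(r)=e^{\sigma r}Y(r)$ gives
\[
rY_{2}+(2\sigma r+1+\tau)Y_{1}+\bigl[(\sigma^{2}-\alpha^{2})r+\sigma(1+\tau)+(\beta-\gamma)\bigr]Y=0.
\]
Setting $\sigma=\pm\alpha$ kills the coefficient of $r$; taking $\sigma=\alpha$ leaves
\[
rY_{2}+(2\alpha r+1+\tau)Y_{1}+\bigl[\alpha(1+\tau)+(\beta-\gamma)\bigr]Y=0.
\]

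Next I would apply the operator $\nabla^{\nu}$ to both sides and use the Leibniz-type identity $\nabla^{\nu}(rf)=r\,\nabla^{\nu}f+\nu E\,\nabla^{\nu-1}f$ (this is the same manipulation used to pass from Eq.\ (\ref{F}) to Eq.\ (\ref{G})), giving
\[
rY_{2+\nu}+(\nu E+2\alpha r+1+\tau)Y_{1+\nu}+\bigl[\alpha(2\nu E+1+\tau)+(\beta-\gamma)\bigr]Y_{\nu}=0.
\]
Now pick $\nu$ to annihilate the zeroth-order coefficient: $\alpha(2\nu E+1+\tau)+(\beta-\gamma)=0$ forces $\nu=\mathrm{a}E^{-1}$ with $\mathrm{a}=-\bigl(\beta-\gamma+\alpha(1+\tau)\bigr)/(2\alpha)$; then $\nu E=\mathrm{a}$, and writing $\varphi(r)=Y_{1+\mathrm{a}E^{-1}}(r)$ reduces the equation to the first-order linear ODE
\[
\varphi_{1}(r)+\bigl[2\alpha+(1+\tau+\mathrm{a})r^{-1}\bigr]\varphi(r)=0,
\]
whose solution is $\varphi(r)=Ae^{-2\alpha r}r^{-(1+\tau+\mathrm{a})}$. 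Unwinding the three substitutions in reverse order — $Y=[\varphi]_{-(1+\mathrm{a}E^{-1})}$, then $U=e^{\alpha r}Y$, then $R=r^{(1+\tau)/2}U$ — yields exactly the stated $R^{I}$.

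The other three solutions $R^{II},R^{III},R^{IV}$ correspond to the remaining sign choices $(\lambda,\sigma)\in\{(\tfrac{1+\tau}{2},-\alpha),(\tfrac{1-\tau}{2},\alpha),(\tfrac{1-\tau}{2},-\alpha)\}$, which produce the constants $\mathrm{b},\mathrm{c},\mathrm{d}$ as written. I do not expect a serious obstacle here: the argument is structurally identical to the proof of Theorem \ref{thm3.1}. The only subtle point is keeping careful track of signs in the coefficient $\beta-\gamma$ (rather than $\beta$) when choosing $\nu$, since a sign error there propagates into all four of $\mathrm{a},\mathrm{b},\mathrm{c},\mathrm{d}$.
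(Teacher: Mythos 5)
Your proposal is correct and is exactly the argument the paper intends: the paper gives no separate proof of Theorem \ref{thm3.2}, stating only that it follows by ``implementation of similar steps'' from Theorem \ref{thm3.1}, and your substitutions ($\eta\mapsto\alpha$, $\beta\mapsto\beta-\gamma$) carry that proof over faithfully, yielding the stated constants $\mathrm{a},\mathrm{b},\mathrm{c},\mathrm{d}$ and all four particular solutions.
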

\begin{thm}\label{thm3.3}
	Let $ R\in\{R:0\neq\mid R_{\nu} \mid<\infty,\nu\in\mathbf{R}\} $ and $ \rho=-2 $ in Eq. (\ref{C}). So,
	\[ r^{2}R_{2}(r)-\Big[\alpha^{2}r^{2}-\beta{r}+(\gamma+\delta)\Big]R(r)=0, \]
	has particular solutions as follows:
	\[ R^{I}=Ae^{\alpha{r}}r^{(\frac{1+\tau}{2})}\Big[e^{-2\alpha{r}}r^{-(1+\tau+\mathrm{a})}\Big]_{-(1+\mathrm{a}E^{-1})}\quad\Bigg(\mathrm{a}=-\Big(\frac{\beta+\alpha(1+\tau)}{2\alpha}\Big)\Bigg), \]
	\[ R^{II}=Be^{-\alpha{r}}r^{(\frac{1+\tau}{2})}\Big[e^{2\alpha{r}}r^{-(1+\tau+\mathrm{b})}\Big]_{-(1+\mathrm{b}E^{-1})}\quad\Bigg(\mathrm{b}=\Big(\frac{\beta-\alpha(1+\tau)}{2\alpha}\Big)\Bigg), \]
	\[ R^{III}=Ce^{\alpha{r}}r^{(\frac{1-\tau}{2})}\Big[e^{-2\alpha{r}}r^{-(1-\tau+\mathrm{c})}\Big]_{-(1+\mathrm{c}E^{-1})}\quad\Bigg(\mathrm{c}=-\Big(\frac{\beta+\alpha(1-\tau)}{2\alpha}\Big)\Bigg), \]
	\[ R^{IV}=De^{-\alpha{r}}r^{(\frac{1-\tau}{2})}\Big[e^{2\alpha{r}}r^{-(1-\tau+\mathrm{d})}\Big]_{-(1+\mathrm{d}E^{-1})}\quad\Bigg(\mathrm{d}=\Big(\frac{\beta-\alpha(1-\tau)}{2\alpha}\Big)\Bigg), \]
	where $ \lambda=\frac{1\pm\tau}{2},\tau=\sqrt{1+4(\gamma+\delta)},\sigma=\pm\alpha $ and $ A,B,C,D $ are constants.
\end{thm}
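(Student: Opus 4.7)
The plan is to follow the same four-step scheme that proved Theorem \ref{thm3.1}, with the parameter identifications appropriate to the case $\rho = -2$. The key observation is that when $\rho = -2$, the term $\gamma r^{\rho+2}$ reduces to the constant $\gamma$, so it merges with $\delta$ in Eq.\ (\ref{C}). Consequently one should expect $\eta = \alpha^{2} + \gamma$ from Theorem \ref{thm3.1} to be replaced by $\alpha^{2}$ (hence $\sigma = \pm \alpha$) and $\delta$ to be replaced by $\gamma + \delta$ (hence $\tau = \sqrt{1 + 4(\gamma+\delta)}$).

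First I would substitute $R(r) = r^{\lambda} U(r)$. Computing $r^{2} R_{2}$ and dividing the resulting ODE by $r^{\lambda+1}$ yields
\[ r U_{2}(r) + 2\lambda U_{1}(r) + \bigl[ -\alpha^{2} r + \beta + (\lambda(\lambda-1) - (\gamma+\delta)) r^{-1} \bigr] U(r) = 0. \]
Forcing $\lambda(\lambda - 1) = \gamma + \delta$ removes the singular $r^{-1}$ term and fixes the two admissible values $\lambda = (1 \pm \tau)/2$. For each $\lambda$, the substitution $U(r) = e^{\sigma r} Y(r)$ produces an equation of the form analyzed in Theorem \ref{thm3.1} but with $\alpha^{2} + \gamma$ replaced by $\alpha^{2}$; choosing $\sigma^{2} = \alpha^{2}$ eliminates the free $r$-term and leaves, for instance in the case $\lambda = (1+\tau)/2$, $\sigma = \alpha$,
\[ r Y_{2}(r) + (2\alpha r + 1 + \tau) Y_{1}(r) + \bigl[ \alpha(1+\tau) + \beta \bigr] Y(r) = 0. \]

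Next I would apply the discrete fractional operator $\nabla^{\nu}$ to both sides. Using the Leibniz rule together with the shift operator $E$ to handle products such as $r Y_{1}(r)$ and $2\alpha r Y_{1}(r)$ gives
\[ r Y_{2+\nu}(r) + (\nu E + 2\alpha r + 1 + \tau) Y_{1+\nu}(r) + \bigl[ \alpha(2\nu E + 1 + \tau) + \beta \bigr] Y_{\nu}(r) = 0. \]
Now choose $\nu = \mathrm{a} E^{-1}$ with $\mathrm{a} = -(\beta + \alpha(1+\tau))/(2\alpha)$, which annihilates the zeroth-order bracket; setting $\varphi(r) = [Y(r)]_{(1 + \mathrm{a} E^{-1})}$ collapses the remaining equation to a first-order linear ODE whose solution is $\varphi(r) = A e^{-2\alpha r} r^{-(1+\tau+\mathrm{a})}$. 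Inverting via $Y(r) = [\varphi(r)]_{-(1+\mathrm{a} E^{-1})}$ (which is permissible by the composition rule $[U_{\nu}]_{\upsilon} = U_{\nu+\upsilon}$) and then unwinding $U = e^{\alpha r} Y$ and $R = r^{(1+\tau)/2} U$ yields $R^{I}$. The remaining three solutions arise from the three alternative choices $(\lambda, \sigma) \in \{((1+\tau)/2, -\alpha),\ ((1-\tau)/2, \alpha),\ ((1-\tau)/2, -\alpha)\}$, each producing the stated constants $\mathrm{b}, \mathrm{c}, \mathrm{d}$ via the condition $\alpha(2\nu E + 1 \pm \tau) + \beta = 0$.

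The main obstacle is verifying that applying $\nabla^{\nu}$ to the products containing $r$ genuinely produces the shift-operator terms $\nu E$ and $2\nu E$ as written, and that the operator prescription $\nu = \mathrm{a} E^{-1}$ admits a consistent interpretation under which the subsequent composition step (inverting from $\varphi$ back to $Y$) is justified. This is precisely the bookkeeping already handled in the proof of Theorem \ref{thm3.1}, so once the $\rho = 0$ calculation is accepted, no genuinely new analytic difficulty arises here; only the substitutions $\alpha^{2} + \gamma \mapsto \alpha^{2}$ and $\delta \mapsto \gamma + \delta$ need to be tracked carefully through the four cases.
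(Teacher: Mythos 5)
Your proposal is correct and takes exactly the route the paper intends: the paper gives no separate proof of Theorem \ref{thm3.3}, stating only that it follows by ``the implementation of similar steps'' to Theorem \ref{thm3.1}, and your substitutions $\alpha^{2}+\gamma \mapsto \alpha^{2}$ and $\delta \mapsto \gamma+\delta$ (with the four $(\lambda,\sigma)$ branches) are precisely those steps. The only quibble is that for the $\sigma=-\alpha$ branches the annihilation condition should read $\sigma(2\nu E+1\pm\tau)+\beta=0$ rather than $\alpha(2\nu E+1\pm\tau)+\beta=0$, but the constants $\mathrm{b}$ and $\mathrm{d}$ you report are already the ones consistent with the correct sign.
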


\subsection{Some Examples}
\begin{exa}
	Consider a radial Schr{\"o}dinger equation as
	\begin{equation}
	r^{2}R_{2}-(5r^{2}-2r+2)R=0,\label{ex1}
	\end{equation}
	where
	\[ \tau=3,\quad\eta=5,\quad a=-\frac{11}{5},\quad b=-\frac{9}{5},\quad c=\frac{4}{5},\quad d=\frac{6}{5}.  \]
	Then, we obtain the particular solutions of Eq. (\ref{ex1}) by means of Theorem \ref{thm3.1} as follows:
	\[ R(r)=Ae^{5r}r^{-1}(e^{-10r}r^{6/5})_{-9/5}=Ce^{5r}r^{2}{}_{1}\textbf{F}_{1}\Big[\frac{11}{5},4,-10r\Big], \]
	and
	\[ R(r)=Be^{-5r}r^{-1}(e^{10r}r^{4/5})_{-11/5}=De^{-5r}r^{2}{}_{1}\textbf{F}_{1}\Big[\frac{9}{5},4,10r\Big], \]
	where $ A,B $ are arbitrary constants and  $ C=0.183634A, D=0.155231B $ and $ {}_{1}\textbf{F}_{1} $ is hypergeometric function.
\end{exa}

\begin{exa}
	We have
	\[ \tau=3,\quad\eta=1,\quad a=-1,\quad b=-3,\quad c=2,\quad d=0,  \]
	for a radial equation as
	\begin{equation}
	r^{2}R_{2}-(r^{2}+2r+2)R=0.\label{ex2}
	\end{equation}
	According to Theorem \ref{thm3.2}, we have two particular solutions of Eq. (\ref{ex2}) as
	\[ e^{r}r^{2}(e^{-2r}r^{-3})_{0}\Rightarrow e^{r}r^{-1}(e^{-2r})_{-3}\Rightarrow R(r)=A\frac{e^{-r}}{r}, \]
	and
	\[ e^{-r}r^{2}(e^{2r}r^{-1})_{2}\Rightarrow e^{-r}r^{-1}(e^{2r}r^{2})_{-1}\Rightarrow R(r)=B\frac{e^{r}(1-2r+2r^{2})}{r}, \]
	where $ A,B $ are arbitrary constants.
\end{exa}

\begin{exa}
	Let
	\begin{equation}
	r^{2}R_{2}-(r^{2}-2r+6)R=0.\label{ex3}
	\end{equation}
	So, we obtain
	\[ \tau=5,\quad\eta=1,\quad a=-4,\quad b=-2,\quad c=1,\quad d=3.  \]
	And, by means of Theorem \ref{thm3.3}, we find two particular solutions of Eq. (\ref{ex3}) as
	\[ e^{r}r^{3}(e^{-2r}r^{-2})_{3}\Rightarrow e^{r}r^{-2}(e^{-2r}r^{3})_{-2}\Rightarrow R(r)=A\frac{e^{-r}(6+9r+6r^{2}+2r^{3})}{r^{2}}, \]
	and
	\[ e^{-r}r^{3}(e^{2r}r^{-4})_{1}\Rightarrow e^{-r}r^{-2}(e^{2r}r)_{-4}\Rightarrow R(r)=B\frac{e^{r}(r-2)}{r^{2}}, \]
	where $ A,B $ are arbitrary constants.
\end{exa}
\section*{Conclusion}
We obtained the explicit solutions of the radial Schr{\"o}dinger equation for the potential $ V(r)=a/r^{2}-b/r+cr^{\rho} $ by means of Leibniz rule (\textit{nabla discrete fractional operator}) in DFC. So, we introduced the discrete fractional solutions for $ \rho=0,-1,-2 $. After determining variables, the useful results can be obtained by using equalities (\ref{A}) or (\ref{B}).

%\subsection*{Acknowledgements}

\end{document}